\def\MT@register@subst@font{\MT@exp@one@n\MT@in@clist\font@name\MT@font@list
 \ifMT@inlist@\else\xdef\MT@font@list{\MT@font@list\font@name,}\fi}
\newcommand{\myitem}[1]{%
\item[(#1)]\protected@edef\@currentlabel{#1}%
}
\newcommand{\bit}{\begin{itemize}}    
\newcommand{\eit}{\end{itemize}}
\newcommand{\ben}{\begin{enumerate}}
\newcommand{\een}{\end{enumerate}}
\newcommand{\benroman}{\ben[\normalfont (i)]}  
\let\eroman\een
\newcommand{\bde}{\begin{description}}
\newcommand{\ede}{\end{description}}
\let\oper=\mathbb                               
\newcommand{\III}{\oper{I}}                     
\newcommand{\SSS}{\oper{S}}                     
\theoremstyle{theorem}
\newtheorem{Theorem}{Theorem}
\newtheorem{Theorem-n}{Theorem}
\newtheorem{Proposition}[Theorem]{Proposition}
\newtheorem{Modal Sahlqvist Theorem}[Theorem]{Modal Sahlqvist Theorem}
\newtheorem{Intuitionistic Sahlqvist Theorem}[Theorem]{Intuitionistic  Sahlqvist Theorem}
\newtheorem{Esakia Duality}[Theorem]{Esakia Duality}
\newtheorem{Main Lemma}[Theorem]{Main Lemma}
\newtheorem{Compactness Theorem}[Theorem]{Compactness Theorem}
\newtheorem{Los Theorem}[Theorem]{\LL o\'s' Theorem}
\newtheorem{Isbell Theorem}[Theorem]{Isbell's Zigzag Theorem}
\newtheorem{Diagram Lemma}[Theorem]{Diagram Lemma}
\newtheorem{Transfer Lemma}[Theorem]{Transfer Lemma}
\newtheorem{Subdirect Decomposition Theorem}[Theorem]{Subdirect Decomposition Theorem}
\newtheorem{Claim}[Theorem]{Claim}
\theoremstyle{definition}
\theoremstyle{remark}
 \let\mathscr\relax 
\renewcommand{\int}{\mathsf{int}\,}
\bmdefine{\A}{A} 
\bmdefine{\C}{C}                                
\bmdefine{\B}{B}
\bmdefine{\D}{D}
\bmdefine{\E}{E}
\bmdefine{\Term}{T} 
\bmdefine{\Free}{F}
\bmdefine{\Fb}{F}
\newcommand{\V}{\mathsf{V}}
\newcommand{\K}{\mathsf{K}}
\newcommand{\HHH}{\mathbb{H}}
\newcommand{\PPP}{\mathbb{P}}
\newcommand{\PPU}{\mathbb{P}_{\!\textsc{\textup{u}}}^{}}
\let\LL\L 
\renewcommand{\L}{\mathscr{L}}
\newcommand{\Con}{\mathsf{Con}}
\begin{document}

\title{Congruence permutability in quasivarieties}

\author{Luca Carai, Miriam Kurtzhals, and Tommaso Moraschini}

\address{Luca Carai: Dipartimento di Matematica ``Federigo Enriques'', Universit\`a degli Studi di Milano, via Cesare Saldini 50, 20133 Milano, Italy}\email{luca.carai.uni@gmail.com}

\address{Miriam Kurtzhals and Tommaso Moraschini: Departament de Filosofia, Facultat de Filosofia, Universitat de Barcelona (UB), Carrer Montalegre, $6$, $08001$ Barcelona, Spain}
\email{mkurtzku7@alumnes.ub.edu and tommaso.moraschini@ub.edu}

\maketitle

\begin{abstract}
It is shown that a natural notion of congruence permutability for quasivarieties already implies ``being a variety''. The result follows immediately from \cite{CBCCong} and the sole aim of this note is to state it explicitly, together with a telegraphic proof.
\end{abstract}

We denote the class operators of closure under isomorphic copies, subalgebras, homomorphic images, direct products, and ultraproducts by $\III, \SSS, \HHH, \PPP$, and $\PPU$, respectively. A class of algebras is said to be:
\benroman
\item a \emph{variety} when it is closed under $\HHH, \SSS$, and $\PPP$;
\item a \emph{quasivariety} when it is closed under $\III, \SSS, \PPP$, and $\PPU$.
\eroman
While every variety is a quasivariety, the converse is not true in general.
We call \emph{proper} the quasivarieties that are not varieties. Examples of a proper quasivariety include the class of cancellative commutative monoids.

As quasivarieties need not be closed under $\HHH$, the following concept is often useful.\ Let $\mathsf{K}$ be a quasivariety and $\A$ an algebra (not necessarily in $\K$). A congruence $\theta$ of $\A$ is said to be a $\K$\emph{-congruence} when $\A / \theta \in \K$. When ordered under the inclusion relation, the set of $\K$-congruences of $\A$ forms an algebraic lattice $\Con_\K(\A)$ in which  meets are intersections (see, e.g., \cite[Prop.~1.4.7 \& Cor.~1.4.11]{Go98a}). 
When $\K$ is the quasivariety  of all algebras of a given type, $\mathsf{Con}_\K(\A)$ coincides with the congruence lattice $\mathsf{Con}(\A)$ of $\A$. Given a quasivariety $\K$ and an algebra $\A$, we denote by $\land, \lor$ and $\land^{\mathsf{K}}, \lor^{\mathsf{K}}$ the meet and join operations in $\mathsf{Con}(\A)$  and $\mathsf{Con}_{\mathsf{K}}(\A)$ respectively. Moreover, we will rely on the following observation, which is an immediate consequence of 
\cite[Lem.~4.2]{BR99}:
for every $a, b \in A$ and $X \subseteq \mathsf{Con}_\K(\A)$,
\begin{equation}\label{Eq : Cg is finitary for princiapl}
    \langle a, b \rangle \in \bigvee^\K X \iff \text{there exists a finite $Y \subseteq X$ such that }\langle a, b \rangle \in \bigvee^\K Y.
    \end{equation}

The following is 
 a straightforward corollary 
of the Homomorphism Theorem (see, e.g., \cite[Thm.~II.6.12]{BuSa00}).

\begin{Proposition}
A quasivariety $\K$ is a variety if and only if $\mathsf{Con}(\A) = \mathsf{Con}_\K(\A)$ for every $\A \in \K$.
\end{Proposition}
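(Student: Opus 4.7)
The plan is to prove both implications by direct appeal to the Homomorphism Theorem, using the fact that a quasivariety is already closed under $\III, \SSS, \PPP, \PPU$, so to be a variety it suffices to establish closure under $\HHH$.

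For the left-to-right direction, I would assume $\K$ is a variety and take an arbitrary $\A \in \K$. Since $\mathsf{Con}_\K(\A) \subseteq \mathsf{Con}(\A)$ always holds, only the reverse inclusion needs proof. Given $\theta \in \mathsf{Con}(\A)$, the Homomorphism Theorem exhibits $\A/\theta$ as a homomorphic image of $\A$, and closure of $\K$ under $\HHH$ forces $\A/\theta \in \K$, whence $\theta \in \mathsf{Con}_\K(\A)$.

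For the right-to-left direction, I would assume $\mathsf{Con}(\A) = \mathsf{Con}_\K(\A)$ for every $\A \in \K$ and verify that $\K$ is closed under $\HHH$. Given $\A \in \K$ and a homomorphic image $\B$ of $\A$, the Homomorphism Theorem provides a congruence $\theta \in \mathsf{Con}(\A)$ with $\B \cong \A/\theta$. The hypothesis gives $\theta \in \mathsf{Con}_\K(\A)$, hence $\A/\theta \in \K$, and closure of $\K$ under $\III$ (which holds because $\K$ is a quasivariety) delivers $\B \in \K$. Together with the quasivariety closure properties, this makes $\K$ a variety.

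There is no real obstacle: both halves reduce to pairing the definition of $\mathsf{Con}_\K$ with the standard identification of homomorphic images as quotients by congruences. The only subtlety worth flagging is that closure under $\III$ is needed in the backward direction to pass from $\A/\theta \in \K$ to an arbitrary isomorphic image $\B \in \K$, which is why the statement is framed for quasivarieties rather than for abstract classes.
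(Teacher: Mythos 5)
Your proof is correct and follows exactly the route the paper intends: the paper gives no detailed argument, merely noting the statement is a straightforward corollary of the Homomorphism Theorem, and both of your directions are the standard instantiations of that theorem (translating closure under $\HHH$ into membership of quotients $\A/\theta$ in $\K$, with closure under $\III$ handling isomorphic copies). Nothing is missing.
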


Given two binary relations $R_1$ and $R_2$ on a set $A$, we let
\[
R_1 \circ R_2 = \{ \langle a, b \rangle \in A \times A : \text{there exists }c\in A \text{ s.t. }\langle a, c \rangle \in R_1 \text{ and }\langle c, b \rangle \in R_2 \}.
\]
A variety $\mathsf{K}$ is said to be \emph{congruence permutable} when for every $\A \in \mathsf{K}$ and $\theta, \phi \in \mathsf{Con}(\A)$ we have $\theta \circ \phi = \phi \circ \theta$. Equivalently, $\K$ is congruence permutable if and only if $\theta \lor \phi = \theta \circ \phi$ for every $\theta,\phi \in \mathsf{Con}(\A)$ (see, e.g., \cite[Thm.~5.9]{BuSa00}). We will prove that the analogous version of the latter property for quasivarieties implies ``being a variety''. This observation is a direct consequence of the results in \cite{CBCCong}, as shown in the next proof.

\begin{Theorem}\label{Theorem : CP implies variety : appendix}
Let $\mathsf{K}$ be a quasivariety such that $\theta \lor^{\mathsf{K}} \phi = \theta \circ \phi$ for every $\A \in \mathsf{K}$ and $\theta, \phi \in \mathsf{Con}_{\mathsf{K}}(\A)$. Then $\mathsf{K}$ is a variety.
\end{Theorem}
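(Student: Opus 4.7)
My plan is to apply the preceding Proposition, which reduces the claim to showing that $\Con(\A) = \Con_\K(\A)$ for every $\A \in \K$. First, I would extract from the hypothesis that $\Con_\K(\A)$ is a complete sublattice of $\Con(\A)$ for every $\A \in \K$. Since $\theta \circ \phi \subseteq \theta \lor \phi$ always holds by transitivity in $\Con(\A)$, while the hypothesis $\theta \circ \phi = \theta \lor^\K \phi$ makes $\theta \circ \phi$ a congruence containing $\theta \cup \phi$, we also get $\theta \lor \phi \subseteq \theta \circ \phi$; hence $\theta \lor \phi = \theta \lor^\K \phi$ for all $\theta, \phi \in \Con_\K(\A)$. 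Invoking \eqref{Eq : Cg is finitary for princiapl} to reduce arbitrary joins in $\Con_\K(\A)$ to finite ones, the same coincidence then propagates to arbitrary joins.

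Next I would perform the classical Mal'cev-term extraction in the free $\K$-algebra on three generators $x, y, z$. By hypothesis, $\Cg_\K(x,y) \circ \Cg_\K(y,z)$ is a $\K$-congruence containing $(x, z)$, so one can pick a witness $p(x, y, z)$ in the free algebra with $x \,\Cg_\K(y,z)\, p$ and $p \,\Cg_\K(x,y)\, z$. Reading $p$ as a ternary term yields $\K \models p(x, y, y) \approx x$ and $\K \models p(x, x, y) \approx y$, so $\K$ admits a Mal'cev term.

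With these two ingredients in hand, I would invoke the results of \cite{CBCCong}. The expected input is a theorem there which, under these assumptions, forces $\Cg(a,b) = \Cg_\K(a,b)$ for every $\A \in \K$ and every $a, b \in A$. Granting this, the complete-sublattice property lifts from principal congruences to arbitrary ones: every $\theta \in \Con(\A)$ equals $\bigvee \{ \Cg(a,b) : (a,b) \in \theta \} = \bigvee \{ \Cg_\K(a,b) : (a,b) \in \theta \}$, and the latter lies in $\Con_\K(\A)$ by what has already been shown. The preceding Proposition then delivers that $\K$ is a variety.

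The hard part will be the identification of $\Cg(a,b)$ with $\Cg_\K(a,b)$: the complete-sublattice property and the existence of a Mal'cev term alone do not force this (for instance, torsion-free abelian groups admits the Mal'cev term $x - y + z$ but is not a variety, and indeed already fails the hypothesis on $\mathbb{Z}^2$ with the subgroups $\mathbb{Z}(1,0)$ and $\mathbb{Z}(1,2)$). This final step is the one where the substantive input from \cite{CBCCong} must be used, presumably through an analysis of how a Mal'cev term constrains principal congruences in the presence of relative congruence permutability.
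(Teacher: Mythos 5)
Your first half is essentially the paper's entire proof: from the hypothesis you correctly get $\theta \lor^{\K} \phi = \theta \circ \phi = \theta \lor \phi$ for all $\theta, \phi \in \Con_\K(\A)$, and via \eqref{Eq : Cg is finitary for princiapl} (with the small omitted detail that for empty $Y$ the join $\bigvee^{\K} Y$ is the identity relation because $\A \in \K$) this yields that $\Con_\K(\A)$ is a complete sublattice of $\Con(\A)$ for every $\A \in \K$. At that point the paper simply stops, because the result it imports from \cite{CBCCong} is exactly the bridge you are missing: a quasivariety $\K$ is a variety provided $\Con_\K(\A)$ is a complete sublattice of $\Con(\A)$ for every $\A \in \K$. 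No Mal'cev term and no analysis of principal congruences is needed.

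The gap is therefore in your second half: you conjecture that the input from \cite{CBCCong} is the identity $\Cg(a,b) = \Cg_\K(a,b)$ under the stated assumptions, and you explicitly leave its proof open (``the hard part''), so the proposal does not actually complete the argument. Moreover, the remark you offer in support is off target: the complete-sublattice property for all $\A \in \K$ alone already forces $\K$ to be a variety by \cite{CBCCong}, and hence forces $\Cg(a,b) = \Cg_\K(a,b)$ a posteriori. Your torsion-free abelian groups example only shows that a Mal'cev term by itself is insufficient; indeed, on $\mathbb{Z}^2$ the join in $\Con(\mathbb{Z}^2)$ of the congruences corresponding to the pure subgroups $\mathbb{Z}(1,0)$ and $\mathbb{Z}(1,2)$ corresponds to $\mathbb{Z} \times 2\mathbb{Z}$, which is not a $\K$-congruence, so that quasivariety also fails the sublattice property and is no counterexample to the implication you doubt. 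The Mal'cev-term extraction itself is correct (relative permutability of a quasivariety does yield a Mal'cev term), but it is a detour that plays no role in the paper's proof and does not close the missing step.
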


    \begin{proof}
        In view of  \cite{CBCCong}, it suffices to show that $\mathsf{Con}_{\mathsf{K}}(\A)$ is a complete sublattice of $\mathsf{Con}(\A)$ for every $\A \in \mathsf{K}$. To this end, consider $\A \in \K$. As meets are intersections in both $\mathsf{Con}_{\mathsf{K}}(\A)$ and $\mathsf{Con}(\A)$ and $\mathsf{Con}_{\mathsf{K}}(\A) \subseteq \mathsf{Con}(\A)$, it suffices to show that 
        \begin{equation}\label{Eq : the useful inclusion : CP}
            \bigvee^\K X \subseteq \bigvee X \text{ for every }X \subseteq \mathsf{Con}_\K(\A).
        \end{equation}

        We begin with the following observation.
        \begin{Claim}\label{Claim : the unique claim of the short note}
For every $\theta, \phi \in \mathsf{Con}_\K(\A)$ we have $\theta \lor^\K \phi \subseteq  \theta \lor \phi$.
        \end{Claim}

\begin{proof}[Proof of the Claim]
Since $\theta \lor \phi$ contains $\theta \circ \phi$ (see, e.g., \cite[Thms.~I.4.7 \& II.5.3]{BuSa00}) and the assumptions ensure that $\theta \circ \phi = \theta \lor^\K \phi$, we conclude that $\theta \lor^\K \phi \subseteq  \theta \lor \phi$.
\end{proof}

To prove (\ref{Eq : the useful inclusion : CP}), consider $\langle a, b \rangle \in \bigvee^\K X$. By (\ref{Eq : Cg is finitary for princiapl}) there exists a finite $Y \subseteq X$ such that
\begin{equation}\label{Eq : the useful inclusion : CP : 2}
    \langle a, b \rangle \in \bigvee^\K Y.
\end{equation}
First, suppose that $Y$ is nonempty. By applying in sequence (\ref{Eq : the useful inclusion : CP : 2}), Claim \ref{Claim : the unique claim of the short note}, and $Y \subseteq X$, we obtain
\[
\langle a, b \rangle \in \bigvee^\K Y \subseteq \bigvee Y \subseteq \bigvee X
\]
and we are done.
Next, suppose that $Y$ is empty. In this case, $\bigvee^\K Y$ is the minimum of $\mathsf{Con}_\K(\A)$. As $\A \in \K$ by assumption, this minimum is the identity relation on $A$. Together with (\ref{Eq : the useful inclusion : CP : 2}), this yields $a = b$. Consequently, $\langle a, b \rangle$ belongs to every congruence of $\A$ and, in particular, to $\bigvee X$.
\end{proof}

We remark that Theorem \ref{Theorem : CP implies variety : appendix} does not imply that every subquasivariety of a congruence permutable variety $\V$ is also a variety (counterexamples are well known and easy to find, e.g., in the case where $\V$ is the variety of all Heyting algebras).

\end{document}